 \theoremstyle{plain}
\newtheorem{proposition}{Proposition}
\theoremstyle{definition}
\newtheorem{theorem}{Theorem}
  \title[ Dense Principal  Lines on Tori  Embedded  in  $\mathbb R^3$
]{Tori Embedded in $\mathbb R^3$ with Dense   Principal  Lines  }
 \author{ R.   Garcia and J. Sotomayor}
 \keywords{ principal curvature lines, recurrence,  Clifford torus,
variational equation. \;\;
MSC: 53C12, 34D30,  53A05, 37C75
}
 \thanks{The  authors are fellows of CNPq and
done this work under the project CNPq 473747/2006-5}
\begin{document}

   \begin{abstract} In this paper
  are given
 examples  of
tori  $\mathbb T^2$  embedded  in $\mathbb R^3$
with all  their  principal lines
 dense. These examples are obtained by
 stereographic projection of deformations of the Clifford torus in $\mathbb S^3$.

\end{abstract}

 \maketitle

\section{ Introduction}

Let $\alpha:\mathbb M \to\mathbb S^3$ be
an
immersion
 of class
 $ C^{r }, r\geq 3,$
of a smooth, compact and oriented two-dimensional manifold
$\mathbb M  $ into  the three dimensional sphere ${\mathbb
S}^{3}$, endowed with the canonical inner product $<.,.>$ of
$\mathbb R^4$.

 The {\em Fundamental Forms} of $\alpha $ at
a point $p$ of ${\mathbb M}$ are the symmetric bilinear forms on
${\mathbb T}_p \mathbb M$ defined after  Spivak  \cite{spi},  as
follows:

 $$\aligned I_{\alpha }(p;v,w)=& <D\alpha
(p;v),D\alpha (p;w)>,\\
 II_{\alpha}(p;v,w)=& <-DN_{\alpha}(p;v),D\alpha(p;w)>.\endaligned
$$

 \noindent Here,  $N_{\alpha }$ is the positive unit normal of the immersion $\alpha$
 tangent to
 $\mathbb S^3$ i.e.
$<N_\alpha,\alpha>=0$.

In $\mathbb S^3$,  the coefficients of
the
 second fundamental form
$II_\alpha$ in a local
 positive chart $(u,v)$,  relative to the normal
vector $N=\alpha\wedge \alpha_u\wedge \alpha_v$, are given by:
 $$ \text{\small $ e=\frac{  \det[\alpha, \alpha_u,\alpha_v,\alpha_{uu}]}{
\sqrt{   EG -F^2}}, \;
 f=\frac{  \det[\alpha, \alpha_u,\alpha_v,\alpha_{uv}]}{\sqrt{EG-F^2}},\;
g=\frac{ \det[\alpha,\alpha_u,\alpha_v,\alpha_{vv}]}{\sqrt{ EG-F^2
}}.$}  $$
Here  $E=<\alpha_u, \alpha_u>$, $F=<\alpha_u, \alpha_v>$ and $G=<\alpha_v, \alpha_v>$
are the coefficients of the first fundamental form $I_\alpha$.

The eigenvalues of $II_\alpha- \kappa I_\alpha=0 $ are called  principal
curvatures and the corresponding eigenspaces are called principal directions.

 Outside the set of  umbilic  point ${\mathcal U}_\alpha$
 where the  two principal curvatures
coincide,  the principal directions define
  two line fields, called the {\em principal line fields} of
$\alpha $, which  are of class $C^{r-2}$.
The integral curves of
 the
 principal line fields on $\mathbb M\setminus
{\mathcal U}_{\alpha }$  are called {\it principal curvature
lines}.

The principal
 curvature
   lines can be assembled in two one-dimensional
orthogonal foliations which will be denoted by ${\mathcal
F}_1(\alpha)$ and ${\mathcal F}_2(\alpha)$.
The triple ${\mathcal P}_\alpha=\{ {\mathcal F}_1(\alpha),{\mathcal F}_2(\alpha), {\mathcal U}_\alpha\}$
 is called the {\it principal configuration} of the immersion $\alpha$, \cite{gs1, gs2}.

In a local chart $(u,v)$ the principal  directions of
 an immersion $\alpha$ are defined by the implicit differential equation

\begin{equation}
(Fg-Gf)dv^2+(Eg-Ge)dudv+(Ef-Fe)du^2=0
\end{equation}

A  principal line $\gamma$ is called {\it  recurrent} if
$\gamma\subseteq L(\gamma)$, where $L(\gamma)=\alpha(\gamma)\cup
\omega(\gamma)$ is the {\it limit set} of $\gamma$, and it is
called {\it dense } if ${L(\gamma)}=\mathbb M$.
 Recall that $\alpha$ - limit set, $\alpha(\gamma)$, of an oriented leaf $\gamma$ is
 the set of limit points of  all convergent sequences of the form $\gamma(s_n
 )$,
 where $s_n$ tends to the positive extreme of definition of the
 oriented leaf. The same for the  $\omega$ - limit set,
 $\omega(\gamma)$, exchanging positive by negative in the previous
 definition.
  Clearly, $L(\gamma)$ does not depend on the orientation of the
  leaf.

There is a great similarity between the properties of principal
lines in the cases of surfaces in   Euclidean and in   Spherical
spaces. They correspond to each other by means of a stereographic
projection which, being conformal,  preserves the principal lines.

The study of principal curvature lines on  surfaces $\mathbb M$
immersed in
 $\mathbb R^3$ and $\mathbb S^3$ is a classical
subject of Differential Geometry. See \cite{dc},
\cite{da}, \cite{spi}, \cite{st}.
 
 In the works of Gutierrez  and Sotomayor,
   see \cite{gs1, gsln, gs2},
  ideas coming from the Qualitative Theory of Differential Equations and Dynamical Systems such as
 Structural Stability, Recurrence (and its elimination) and Genericity
 were introduced into the subject of Principal Curvature  Lines.

 Other differential equations of Classical
 Geometry such as asymptotic lines, lines of arithmetic,
 geometric and harmonic mean curvatures,  have been studied in  papers by Garcia, Gutierrez and
 Sotomayor, 1999,
  and
 Garcia and Sotomayor
 1997, 2001, 2002, 2003.
  A unification theory for these differential equations
   was
  achieved  by Garcia and Sotomayor, 2004.
 A recent survey on classical and recent developments on   principal curvature lines, with updated references,
  can be found in \cite{survey}.

 The interest  on  the  study  of foliations with recurrent and dense
 leaves goes back to Poincar\'e, Birkhoff, Denjoy, Peixoto, among others.

Examples of tori embedded  in $\mathbb R^3$
 having only one  of the
two principal foliation with dense leaves have been given in
\cite{glsabc, gsln, gs2}.

In this paper are
 provided
examples of  embedded tori in $\mathbb R^3$
 with both principal
foliations  having dense leaves.
These  examples, established in Theorem \ref{th:2}, are obtained as  the stereographic projections
 of special deformations of the Clifford torus in $\mathbb S^3$. These special
 deformations are defined in the proof of Theorem \ref{th:1p}. See Section \ref{sc:3}.

  \section{Preliminary Calculations}

In this section  will be obtained the variational equations of a
quadratic implicit differential equation. The results
  will be
applied in Section \ref{sc:3}.

\begin{proposition}\label{prop:eqee} Consider a family of  quadratic
 differential equations, depending on a parameter $\epsilon$, of the form

\begin{equation} \aligned \label{eq:qe} a&(u,v,\epsilon) dv^2+ 2b(u,v,\epsilon) dudv+ c(u,v,\epsilon)du^2=0,\\
a&(u,v,0)=c(u,v,0)=0, \;\; b(u,v,0)=1
\endaligned
 \end{equation}

Let $v(u,v_0,\epsilon)$
 be a
solution of \eqref{eq:qe} with $v(u,v_0,0)=v_0$ and
$u(u_0,v,\epsilon)$
 be a
solution of \eqref{eq:qe} with $u(u_0,v,0)=u_0$. Then the
following variational equations hold:

\begin{equation}\label{eq:veep}\aligned
   c_\epsilon \; & + 2  v_{\epsilon u}=0,\;\;\; \;\;   a_\epsilon \;   + 2  u_{\epsilon v}=0,  \\
  c_{\epsilon\epsilon}& +2c_{v\epsilon} v_\epsilon -2 b_\epsilon c_\epsilon  +2  v_{u\epsilon\epsilon}=0\\
  a_{\epsilon\epsilon}& +2a_{u\epsilon} u_\epsilon -2 b_\epsilon a_\epsilon  +2  u_{v\epsilon\epsilon}=0.
\endaligned
\end{equation}

\end{proposition}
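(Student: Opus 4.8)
The plan is to differentiate the family of quadratic equations \eqref{eq:qe} along its two natural one-parameter families of solutions and read off the coefficients of successive powers of $\epsilon$. Concretely, write the ODE satisfied by $v = v(u,v_0,\epsilon)$ as
\begin{equation*}
a(u,v,\epsilon)\,v_u^2 + 2b(u,v,\epsilon)\,v_u + c(u,v,\epsilon) = 0,
\end{equation*}
where $v_u = \partial v/\partial u$; this is the branch of \eqref{eq:qe} that reduces at $\epsilon=0$ to $2\,v_u = 0$, i.e. $v\equiv v_0$, so $v_u(u,v_0,0)=0$. The symmetric branch for $u = u(u_0,v,\epsilon)$ is $c\,u_v^2 + 2b\,u_v + a = 0$, reducing to $u_v = 0$ at $\epsilon=0$. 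First I would substitute the known zeroth-order data $v(\cdot,\cdot,0)=v_0$, $v_u(\cdot,\cdot,0)=0$, $a(\cdot,\cdot,0)=c(\cdot,\cdot,0)=0$, $b(\cdot,\cdot,0)=1$ into both identities to confirm they hold identically at $\epsilon=0$; this is the base case that makes the differentiation below legitimate.

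Next I would differentiate the identity $a v_u^2 + 2b v_u + c \equiv 0$ with respect to $\epsilon$, using the chain rule to account for both the explicit $\epsilon$ in $a,b,c$ and the implicit $\epsilon$-dependence through $v$: terms of the form $a_\epsilon v_u^2$, $a_v v_\epsilon v_u^2$, $2a v_u v_{u\epsilon}$, $2b_\epsilon v_u$, $2b_v v_\epsilon v_u$, $2b v_{u\epsilon}$, $c_\epsilon$, $c_v v_\epsilon$. Evaluating at $\epsilon=0$ and using $v_u=0$, $a=c=0$, $b=1$ kills every term containing a factor $v_u$ or $a$ or $c$, leaving precisely $c_\epsilon + 2 v_{u\epsilon} = 0$ — the first equation of \eqref{eq:veep}. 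The companion relation $a_\epsilon + 2 u_{v\epsilon} = 0$ comes by the identical computation on the $u(u_0,v,\epsilon)$ branch. Note that this first-order step also records, for use in the next round, the first-order information about $v_u$ itself, namely $v_{u\epsilon} = -\tfrac12 c_\epsilon$ (and $u_{v\epsilon} = -\tfrac12 a_\epsilon$).

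For the second-order equations I would differentiate the same identity twice in $\epsilon$ and again evaluate at $\epsilon=0$. The bookkeeping is heavier: one gets a sum of terms, each carrying various numbers of $\epsilon$-subscripts distributed among $a,b,c,v$. The key simplification is again that at $\epsilon=0$ we may drop anything still multiplied by $v_u$, $a$, or $c$; what survives are the terms $c_{\epsilon\epsilon}$, $2 c_{v\epsilon} v_\epsilon$ (from differentiating $c_\epsilon$ and $c_v$ once more while picking up one $v_\epsilon$), a term $-2 b_\epsilon c_\epsilon$ coming from the cross term where the two $\epsilon$-derivatives land on $b$ and on the $v_u$ produced at first order (using $v_{u\epsilon} = -\tfrac12 c_\epsilon$ from the previous paragraph), and $2 v_{u\epsilon\epsilon}$. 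Collecting these yields $c_{\epsilon\epsilon} + 2 c_{v\epsilon} v_\epsilon - 2 b_\epsilon c_\epsilon + 2 v_{u\epsilon\epsilon} = 0$, and the mirror computation on the $u$-branch gives the last line of \eqref{eq:veep}.

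I expect the main obstacle to be purely organizational rather than conceptual: keeping track of exactly which second-order cross terms survive the evaluation at $\epsilon=0$, in particular correctly identifying the origin of the $-2b_\epsilon c_\epsilon$ term (it is the place where the first-order expansion $v_{u\epsilon} = -\tfrac12 c_\epsilon$ must be fed back in, rather than treated as an independent quantity). A clean way to control this is to expand $a, b, c, v$ formally in powers of $\epsilon$ from the outset, substitute into $a v_u^2 + 2b v_u + c = 0$, and match coefficients of $\epsilon^1$ and $\epsilon^2$; this makes the vanishing of the irrelevant terms automatic and leaves only the stated relations. Everything else — the chain-rule differentiations and the final collection — is routine.
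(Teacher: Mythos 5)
Your proposal is correct and follows essentially the same route as the paper: differentiate the identity $a\,v_u^2+2b\,v_u+c\equiv 0$ once and twice in $\epsilon$, evaluate at $\epsilon=0$ where $v_u=a=c=0$ and $b=1$, and feed the first-order relation $v_{u\epsilon}=-\tfrac12 c_\epsilon$ back into the second-order identity to produce the $-2b_\epsilon c_\epsilon$ term (the paper's intermediate term $4b_\epsilon v_{\epsilon u}$). No gaps; the $u$-branch is handled by the same symmetric computation as in the paper.
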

\begin{proof}
Differentiation of \eqref{eq:qe} written as
$$a(u,v,\epsilon) (\frac{dv}{du})^2+2 b(u,v,\epsilon) \frac{dv}{du}+c(u,v,\epsilon)=0,\;\;\; v(u,v_0,0)=v_0$$
 leads to:
\begin{equation}\label{eq:ve} (a_\epsilon +a_v v_\epsilon )  (\frac{dv}{du})^2 + 2 a\frac{dv}{du} v_{\epsilon u}+2(b_\epsilon +b_v v_\epsilon )\frac{dv}{du}+ 2b v_{\epsilon u}+c_\epsilon +c_v v_\epsilon =0.\end{equation}
Here $$a_v=\frac {\partial a}{\partial v}, \; a_\epsilon =\frac {\partial a}{\partial \epsilon },\;\;  a_{\epsilon u}= a_{u \epsilon  }=\frac {\partial^2 a}{\partial \epsilon \partial u}=\frac {\partial^2 a}{\partial\epsilon \partial u  }. $$
Analogous notation for    $b=b(u,v(u,v_0,\epsilon),\epsilon)$, $c=c(u,v(u,v_0,\epsilon),\epsilon)$ and   the solution $v(u,v_0,\epsilon)$.

Evaluation of equation \eqref{eq:ve} at $\epsilon =0 $ results in:

$$ c_\epsilon  + 2  v_{\epsilon u}=0.$$

Differentiating   equation  \eqref{eq:ve} and evaluating at $\epsilon=0$  leads to:
\begin{equation}\label{eq:vee}\aligned
 c_{\epsilon\epsilon}&+ 2c_{v\epsilon} v_\epsilon +4b_\epsilon v_{\epsilon u} +2b v_{u\epsilon\epsilon}=0\\
  c_{\epsilon\epsilon}&+2c_{v\epsilon} v_\epsilon -2 b_\epsilon c_\epsilon  +2  v_{u\epsilon\epsilon}=0
\endaligned
\end{equation}

Similar calculation gives the variational equations
 for $u_\epsilon$ and $u_{\epsilon\epsilon}$.
This ends the proof. \end{proof}

\section{ Double Recurrence for Principal Lines}\label{sc:3}

 \begin{theorem} \label{th:1p} There are
 analytic embeddings $\alpha:  \mathbb T^2\to  \mathbb S^3$ such
that all leaves of both principal foliations, ${\mathcal F}_i(\alpha),\; (i=1,2),$   are dense in $\mathbb T^2$.
\end{theorem}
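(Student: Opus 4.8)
The plan is to start from the Clifford torus $\alpha_0(u,v)=\tfrac{1}{\sqrt2}(\cos u,\sin u,\cos v,\sin v)$ in $\mathbb S^3$, which is totally umbilic-free with constant principal curvatures and whose principal lines are the coordinate circles $u=\mathrm{const}$ and $v=\mathrm{const}$; these are periodic, not dense. The idea is to perturb it analytically, $\alpha_\epsilon(u,v)=\alpha_0(u,v)+\epsilon\,\varphi(u,v)\,N_0(u,v)+O(\epsilon^2)$ with $\varphi$ a suitable trigonometric polynomial on $\mathbb T^2$, so that the perturbed principal foliations acquire an irrational rotation number on $\mathbb T^2$ and hence (being analytic, so Denjoy-exceptional behaviour cannot occur) have all leaves dense. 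Concretely, for one of the two principal foliations the holonomy of a transversal circle is a circle diffeomorphism $h_\epsilon$ close to a rigid rotation; one computes its rotation number $\rho(h_\epsilon)$ as a function of $\epsilon$, shows it varies (so $\rho$ takes irrational values for a dense set of $\epsilon$), fixes such an $\epsilon$, and invokes the analytic Denjoy theorem to conclude density of that foliation; then one arranges the perturbation so that the \emph{other} foliation is simultaneously conjugate to an irrational rotation.

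The computational heart is to extract, from the implicit quadratic ODE (1) for the perturbed torus, the first-order deviation of each principal foliation from the coordinate foliations, and this is exactly what Proposition~\ref{prop:eqee} is set up to deliver. One brings the principal equation of $\alpha_\epsilon$ into the normal form (2) by checking that at $\epsilon=0$ it degenerates to $2\,du\,dv=0$ (the Clifford principal directions being $du=0$ and $dv=0$), identifies $a,b,c$ in terms of $\varphi$ and the fundamental forms of $\alpha_0$, and then reads off $v_\epsilon(u,v_0)$ and $u_\epsilon(u_0,v)$ from the variational equations $c_\epsilon+2v_{\epsilon u}=0$ and $a_\epsilon+2u_{\epsilon v}=0$. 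Integrating once around the torus ($u$ from $0$ to $2\pi$, resp.\ $v$ from $0$ to $2\pi$) gives the $\epsilon$-derivative of the holonomy displacement of each foliation as an explicit integral of $c_\epsilon$, resp.\ $a_\epsilon$, against the periodic solutions; choosing $\varphi$ a low-order harmonic like $\varphi(u,v)=\cos u\cos v$ (or a combination) should make this derivative a nonconstant function of the base point with nonzero average, forcing the rotation number to move off its rational value for the perturbed diffeomorphism. The second-order equations in (2) are there in case the first-order term has zero average and one needs to push to $O(\epsilon^2)$ to detect a nonzero twist.

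The main obstacle I anticipate is \emph{simultaneity}: it is easy to make one principal foliation irrational, but the two principal foliations of the same surface are coupled through the single equation (1), so a perturbation that randomizes the $\mathcal F_1$-holonomy will also change the $\mathcal F_2$-holonomy in a correlated way, and one must show the pair of rotation numbers can be driven to a point where \emph{both} are irrational (ideally rationally independent). I would handle this by treating the two rotation numbers $(\rho_1(\epsilon),\rho_2(\epsilon))$ as a curve in $\mathbb R^2/\mathbb Z^2$, showing via the first/second variational computation that this curve is nonconstant, and then using that any nonconstant real-analytic curve in the torus meets the full-measure set of points with both coordinates irrational; picking $\epsilon$ there and appealing once more to analyticity (Denjoy) for each foliation separately finishes the argument. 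A secondary point to watch is that the perturbed $\alpha_\epsilon$ must remain an embedding and stay umbilic-free (so that the principal foliations are genuinely nonsingular $C^\omega$ foliations of $\mathbb T^2$), which holds for $\epsilon$ small by openness of embeddings and by the computation that the Clifford torus has no umbilics, a condition stable under small perturbation.
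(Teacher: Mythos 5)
Your overall strategy---normal perturbation of the Clifford torus, variational equations for the implicit principal-line equation, then an irrational rotation number plus the analytic Denjoy theorem to get density---is exactly the paper's. But the step you yourself flag as the main obstacle, simultaneity of the two foliations, is where your argument breaks down, and the paper's resolution is the one idea you are missing. You propose to view $(\rho_1(\epsilon),\rho_2(\epsilon))$ as a nonconstant real-analytic curve in $\mathbb T^2$ and intersect it with the full-measure set of doubly irrational points. This fails for two reasons: the rotation number is not an analytic (nor even absolutely continuous) function of the parameter---generically it is a devil's staircase, locally constant at rational values on a set of full measure---so the ``curve'' is only continuous; and for a continuous nonconstant $\rho_i$ the set $\{\epsilon:\rho_i(\epsilon)\notin\mathbb Q\}$ can be a measure-zero Cantor set, so the two sets for $i=1,2$ need not intersect at all. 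The paper sidesteps this entirely by choosing the perturbing function symmetric, $h(u,v)=h(v,u)$: then the involution $\sigma(u,v)=(v,u)$ carries ${\mathcal F}_1(\alpha^{\epsilon})$ to ${\mathcal F}_2(\alpha^{\epsilon})$ and conjugates the two Poincar\'e return maps on the diagonal circle $\Sigma=\{u=v\}$, so the two rotation numbers are automatically equal and only one of them needs to be made irrational---which follows from continuity and nonconstancy alone.

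A secondary gap: you hope a first-order computation with $\varphi=\cos u\cos v$ suffices, keeping second order in reserve ``in case the first-order term has zero average.'' In fact the first variation gives $v_\epsilon(u,v_0)=-\int_0^u h_{uv}\,du=-(h_v(u,v_0)-h_v(0,v_0))$, which is periodic in $u$, so the first-order holonomy displacement over a full period vanishes for \emph{every} doubly periodic perturbation; the passage to second order is forced, not optional. The paper takes $h=\sin^2(u+v)$ and computes $v_{\epsilon\epsilon}(2\pi,v_0,0)-v_{\epsilon\epsilon}(0,v_0,0)=-3\pi$, giving $\pi_1^{\epsilon}(v_0)=v_0-\tfrac{3\pi}{2}\epsilon^2+O(\epsilon^3)$ and hence a monotone, nonconstant rotation number. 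These two points---the symmetry of $h$ and the unavoidable second-order computation---are the substantive content of the proof beyond your outline.
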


\begin{proof}
Let $N(u,v)=(\alpha \wedge  \alpha_u\wedge \alpha_v)  /|\alpha \wedge  \alpha_u\wedge \alpha_v | (u,v)$ be the normal vector to the Clifford torus  $C=\mathbb S^1(r)\times \mathbb S^1(r)\subset \mathbb S^3$, where $\mathbb S^1(r)=\{(x,y)\in \mathbb R^2:\; x^2+y^2=r^2\}$ and $r=\sqrt{2}/2$, parametrized by

$$\alpha(u,v)=\frac{\sqrt{2}}{2}\big( \cos u, \sin u, \cos v, \sin v\big).$$

We have that,
$$ N_\alpha(u,v)=\frac{\sqrt{2}}2 \big( -\cos u, -\sin u, \cos v, \sin v      \big).   $$
Let $c(u,v)=h(u,v) N_\alpha(u,v)$, $h$ being a $2\pi-$ double periodic function having the symmetry
  property
 $h(u,v)=h(v,u)$.

For $\epsilon\ne 0 $ small consider  the one parameter family of embedded tori defined by:
\begin{equation}\label{eq:cehh}  
              \alpha^{\epsilon}  (u,v)=\frac{ \alpha(u,v)
+\epsilon c(u,v)}{ |\alpha(u,v)+\epsilon c(u,v)|}   .\end{equation}
Let $N_{\alpha^{\epsilon} }
=\alpha^{\epsilon}\wedge(\alpha^{\epsilon})_u\wedge(\alpha^{\epsilon})_v$
be the normal map of $\alpha^{\epsilon}$.

The coefficients of the first and second fundamental forms of
$\alpha^{\epsilon}$ are given by:
$$ \text{ \footnotesize
            $ \aligned E(u,v,\epsilon)=&  \frac{1- \epsilon
 h+2\epsilon^2(h^2+h_u^2)-2\epsilon^3 h^3+\epsilon^4 h^4}{2(1+\epsilon^2 h^2)^2}\\
F(u,v,\epsilon)=&   \frac{ \epsilon^2h_uh_v }{(1+\epsilon^2 h^2)^2} \\
G(u,v,\epsilon)=& \frac{1 + \epsilon h+2\epsilon^2(h^2+h_v^2)+2\epsilon^3 h^3+\epsilon^4 h^4}{2(1+\epsilon^2 h^2)^2} \\
e(u,v,\epsilon)=&\frac{(1+\epsilon h)[1+ \epsilon(2h_{uu}-h) + \epsilon^2(4h_u^2-2hh_{uu}-h^2)+\epsilon^3 h^3]}{4(1+\epsilon^2 h^2)^2}\\
f(u,v,\epsilon)=& \frac{\epsilon[   h_{uv} +\epsilon^2h(h_uh_v-hh_{uv} ) ]}{2 (1+\epsilon^2 h^2)^2} \\
g(u,v,\epsilon)=& \frac{(1-\epsilon h)[-1+ \epsilon(2h_{vv}-h) + \epsilon^2(h^2-4h_v^2+2hh_{vv})+\epsilon^3 h^3]}{4(1+\epsilon^2 h^2)^2}\\
\endaligned $}$$

From the expressions above,
 it is important to record the
following symmetry relations.

\begin{equation}\label{eq:s12}\text{ \footnotesize
            $  \aligned
E(u,v,\epsilon)=&E(v,u, \epsilon), F(u,v,\epsilon)=F(v,u, \epsilon),G(u,v,\epsilon)=G(v,u, \epsilon)\\ e(u,v,\epsilon)=&e(v,u, \epsilon), \;f(u,v,\epsilon)=f(v,u, \epsilon),\;g(u,v,\epsilon)=g(v,u, \epsilon)  \endaligned$}
\end{equation}

 Then the coefficients of the differential equation of principal
 lines  of $\alpha^{\epsilon}$,
after multiplication by  $- 4(1+\epsilon^2 h^2)^4$, are given by:

$$
\text{ \footnotesize
            $ \aligned
 L(u,v,\epsilon)= & (Fg-Gf)(u,v,\epsilon)=  \epsilon h_{uv}+   \epsilon^2  (2hh_{uv}+  h_u h_v) \\
+& (-2h_uh_vh_{vv}+2h_v^2h_{uv}+h_{uv}h^2+2h_uh_v h)\epsilon^3+ (2h_u h_v h^2+ 4 h_u h_v^3)\epsilon^4\\
&+(-h^4 h_{uv}+2h_u h_vh^2 h_{vv} + 4h^3 h_u h_v-2h_v^2h^2 h_{uv})\epsilon^5\\
+&( -2h^5h_{uv} +5h_u h_v h^4)\epsilon^6+(2h^5 h_u h_v-h^6 h_{uv})\epsilon^7\\
M(u,v,\epsilon)=&(Eg-Ge)(u,v,\epsilon)= 1+\epsilon ( h_{uu}-h_{vv})
+(2h (h_{vv}+h_{uu})+3(h_v^2+h_u^2))\epsilon^2 \\
+&   (-h^2h_{vv}+6h_u^2 h-2h_u^2 h_{vv} + 2 h_v^2h_{uu} -6hh_v^2 +h^2h_{uu})\epsilon^3 \\
+&  (-6h_v^2h^2-2h^2+8h_u^2 h_v^2 +6 h_u^2 h^2)\epsilon^4\\
+&  ( h^4 h_{vv} -h^2 h_{uu} -8h^3 h_v^2-2h^2 h_v^2 h_{uu} + 2 h^2 h_u^2 h_{vv}+8 h^3 h_u^2)\epsilon^5\\
+  & (-2h^5h_{vv}-2h^5 h_{uu}
+7 h^4 h_v^2+7 h^4 h_u^2 )\epsilon^6 \\
+&  ( -h^6 h_{uu} + h^6 h_{vv} +2h^5 h_u^2-2h^5 h_v^2)\epsilon^7+  h^8\epsilon^8  \\
N(u,v,\epsilon)=& (Ef-Fe)(u,v,\epsilon)= - \epsilon h_{uv}+   \epsilon^2  (2hh_{uv}+  h_u h_v) \\
+& ( 2h_uh_vh_{uu}-2h_u^2h_{uv}-h_{uv}h_v^2- h_{uv}h^2)\epsilon^3+ (2h_u h_v h^2+ 4 h_u^3 h_v)\epsilon^4\\
&+(h^4 h_{uv}-2h_u h_v h^2 h_{uu}- 4h^3 h_u h_v+2h_u^2h^2 h_{uv})\epsilon^5\\
+&( -2h^5h_{uv} +5h_u h_v h^4)\epsilon^6+(-2h^5 h_u h_v+h^6 h_{uv})\epsilon^7\\
\endaligned $}
$$

By the equations above  and from the symmetry  property  $h(u,v)=h(v,u)$,
 it follows that

\begin{equation}\label{eq:lmnh}  \text{\footnotesize 
            $
L(u,v,\epsilon)=  L(v,u,\epsilon), \;
  M(u,v,\epsilon)=M(v,u,\epsilon),\;
N(u,v, \epsilon)= N(v,u,  \epsilon).
 $}
\end{equation}
At $\epsilon=0$ it follows that

\begin{equation}\label{eq:ez}\text{ \small
            $ \aligned
  L(u,v,0)=&0,\; M(u,v,0)=1, \;N(u,v,0)=0, \;
M_\epsilon(u,v,0)=h_{uu}-h_{vv}\\
N_\epsilon(u,v,0)=&-h_{uv},\;N_{\epsilon v} (u,v,0)=-h_{uvv},
\; N_{\epsilon\epsilon}(u,v,0)=2(2h h_{uv}+h_u h_v)
\endaligned $}
\end{equation}

Consider the reflection $\sigma:\mathbb T^2\to \mathbb T^2$  defined by $\sigma(u,v)=(v,u)$
 and the circle $ \Sigma=\{(u,u): u\in \mathbb R\}$
 transversal to both principal foliations ${\mathcal F}_i(\alpha^{\epsilon}).$

By equation \eqref{eq:lmnh} it follows that $\sigma({\mathcal
F}_1(\alpha^{\epsilon}))={\mathcal F}_2(\alpha^{\epsilon})$ and
$\sigma({\mathcal F}_2(\alpha^{\epsilon}))={\mathcal
F}_1(\alpha^{\epsilon})$, that is the diffeomorphism $ \sigma:
\mathbb T^2\to \mathbb T^2$ preserves the principal configuration
of $\alpha^{\epsilon}$, reversing the order of the foliations.
Therefore it also conjugates the
Poincar\'e first
 return maps $(\pi_{\alpha^{\epsilon}})_i:\Sigma\to
\Sigma$, $(i=1,2)$
 of the foliations
 ${\mathcal F}_i(\alpha^{\epsilon}).$ For the basic properties of
 the rotation number see \cite{ka} and \cite{pa}.

In what follows it will be shown that the rotation number of
$(\pi_{\alpha^{\epsilon}})_i$ changes monotonically with
$\epsilon$, provided it is small.

By proposition \ref{prop:eqee} with $(a=L,\; 2b=M,\; c=N)$ the variational equations of the implicit differential equation

$$N(u,v,\epsilon)+M(u,v,\epsilon)\frac{dv}{du} +L(u,v,\epsilon) (\frac{dv}{du})^2=0,$$
with $N(u,v,0)=L(u,v,0)=0$, $M(u,v,0) =1$ and $v(u,v_0,0)=v_0$
are given by:

$$ N_\epsilon \;   +    v_{\epsilon u}=0,\;\;\;
  N_{\epsilon\epsilon}  +2N_{v\epsilon} v_\epsilon -  2M_\epsilon N_\epsilon  +2  v_{u\epsilon\epsilon}=0.$$

Therefore,

$$\aligned v_\epsilon (u,v_0)=&- \int_0^u h_{uv}(u,v_0) du,\\
v_{u\epsilon\epsilon }  (u,v_0)= &  [\big( \int_0^u h_{uv} du\big) h_{uvv}-  h_{uv} (2h+ h_{uu}-h_{vv})-  h_u h_v](u,v_0)\endaligned$$

Taking $h(u,v)=\sin^2(u+v)$ it results from the equations above
and by  equation \eqref{eq:ez} that:

$$\aligned v_\epsilon(u,v_0,0)=&  \sin(2u+2v_0)-\sin(2v_0)\\
v_{\epsilon\epsilon}(u,v_0,0)=&-\frac 32 u- \sin(2u+2v_0)+\frac 7{8}\sin(4u+4v_0)+ \sin(2u)\\
+& \sin(2v_0)- \sin(4v_0+2u)+\frac 1{8}\sin(4v_0)
\endaligned$$

Therefore it follows that:
$$\aligned   v_{\epsilon}(2\pi,v_0,0)-v_{\epsilon}(0,v_0,0)=&0,\;\; v_{\epsilon\epsilon}(2\pi,v_0,0)-v_{\epsilon\epsilon}(0,v_0,0)= - 3\pi.
\endaligned$$

Consider  the Poincar\'e map $\pi_1^{\epsilon}: \{u=0\}\to
\{u=2\pi\}$ relative to the principal foliation ${\mathcal
F}_1(\alpha^{\epsilon})$, defined by
$\pi_1^{\epsilon}(v_0)=v(2\pi, v_0,\epsilon)$.  Here,   the circle
$\{u=0\}=\{u=2\pi\}$ is a leaf of ${\mathcal F}_2(\alpha)$.

Therefore  $\pi^1_0=Id$ and  $\pi_1^{\epsilon}$  has the following
development:

$$\aligned \pi_1^{\epsilon}(v_0 )=& v_0+ \frac{\epsilon^2} 2 v_{\epsilon\epsilon}(2\pi,v_0,0) +O(\epsilon^3)\\
=& v_0- \frac{3\pi}2  \epsilon^2+O(\epsilon^3)\endaligned.$$

Therefore  the  rotation number of $\pi_1^{\epsilon}$ changes
continuously and monotonically  with $\epsilon$.  See chapter 12
of \cite{ka} or  chapter  5 of \cite{pa}.

 The rotation number of $\pi_1^{\epsilon}$  is
 irrational if
 and only if that of $(\pi_{\alpha^{\epsilon}})_1$ is irrational.

Therefore we can take a small  $\epsilon_0 \ne 0$  such that
 the  rotation numbers of $(\pi^{\alpha_{\epsilon_0}})_i:\Sigma\to \Sigma$, $i= 1,\; 2$
 are both irrational. For $\alpha=\alpha_{\epsilon_0}$ all leaves of both principal
foliations are dense in $\mathbb T^2$.
 This ends the proof.
\end{proof}

\begin{theorem} \label{th:2} There are analytic embeddings $\beta:  \mathbb T^2\to  \mathbb R^3$ such
that all leaves of both principal foliations, ${\mathcal F}_i(\beta),\; (i=1,2)$,  are dense in $\mathbb T^2$. See Figure \ref{fig:tb}.

\begin{figure}[htbp]
\begin{center}
  \hskip .5cm\includegraphics[scale=0.40]{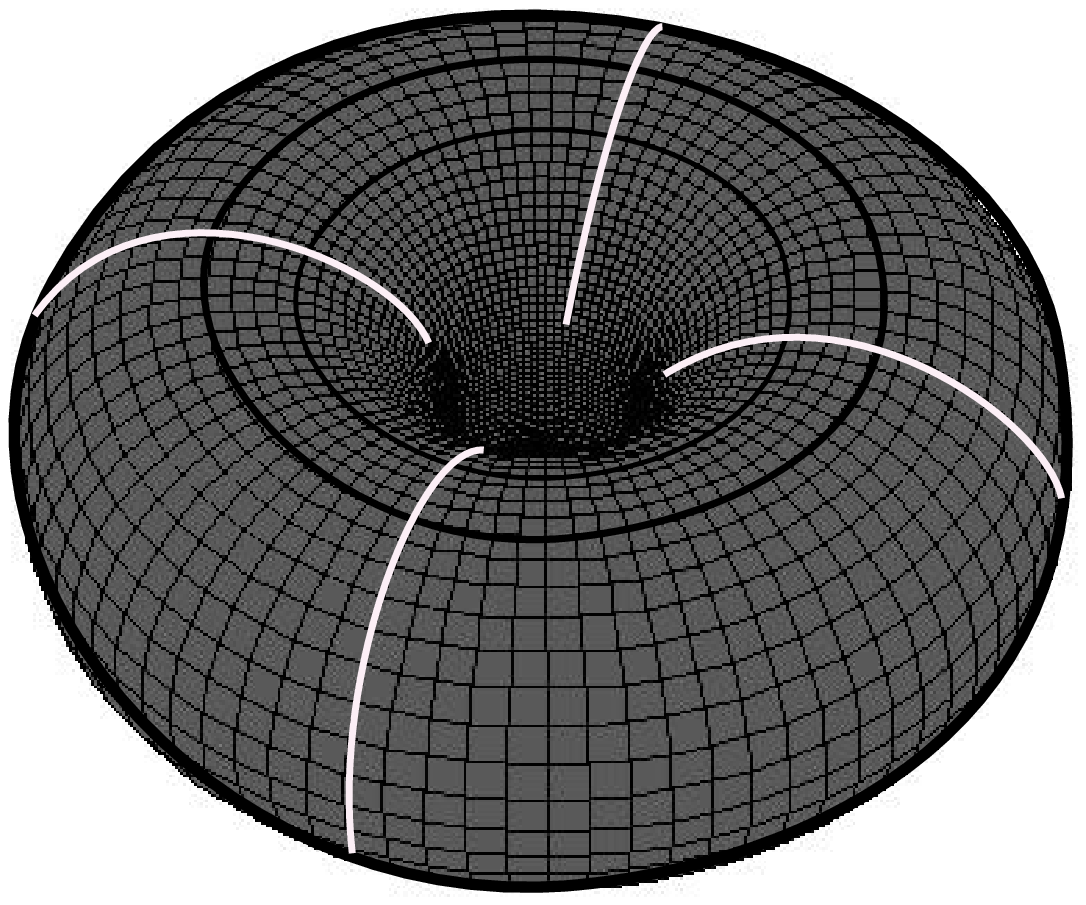} \hskip 2.0cm
  \includegraphics[scale=0.40]{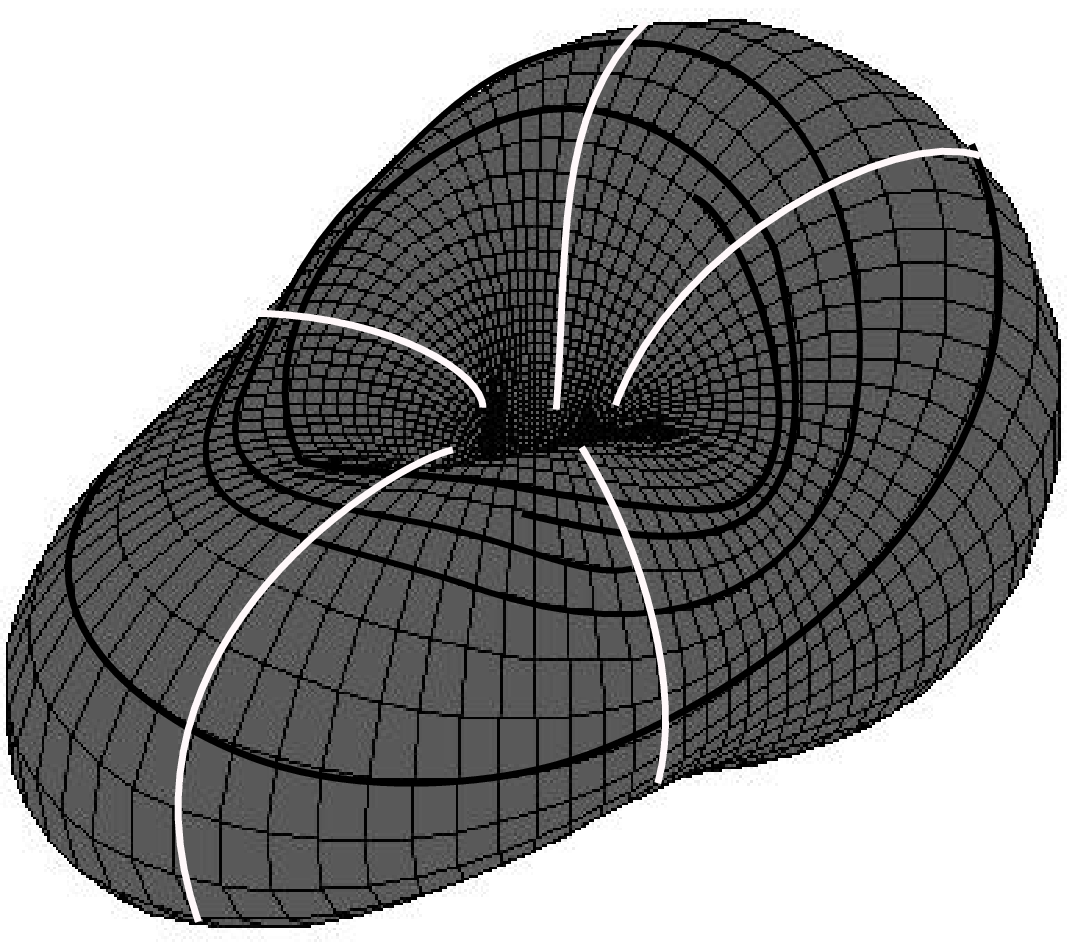}  
\end{center}
\caption{\label{fig:tb}  Stereographic projection of  the Clifford
torus $\alpha^{0}$ (left) and of its $\alpha^{\epsilon} $
deformation for $\epsilon = {\frac 13}$  \, (right)}
\end{figure}

\end{theorem}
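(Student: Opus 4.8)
The plan is to deduce Theorem~\ref{th:2} from Theorem~\ref{th:1p} by stereographic projection, using only the conformal invariance of the principal configuration. Let $\alpha=\alpha^{\epsilon_0}\colon\mathbb T^2\to\mathbb S^3$ be the analytic embedding produced in the proof of Theorem~\ref{th:1p}, for which every leaf of both $\mathcal F_i(\alpha)$, $i=1,2$, is dense in $\mathbb T^2$. Since $\mathbb T^2$ is compact, $\alpha(\mathbb T^2)$ is a proper closed subset of $\mathbb S^3$, so one may pick a point $p_0\in\mathbb S^3\setminus\alpha(\mathbb T^2)$ and let $\Pi\colon\mathbb S^3\setminus\{p_0\}\to\mathbb R^3$ be the stereographic projection from $p_0$. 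Then $\beta:=\Pi\circ\alpha\colon\mathbb T^2\to\mathbb R^3$ is well defined, and it is an analytic embedding because $\alpha$ is an analytic embedding and $\Pi$ is an analytic diffeomorphism onto $\mathbb R^3$.

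Next I would invoke the classical fact, recalled in the Introduction, that $\Pi$ is a conformal diffeomorphism from $\mathbb S^3\setminus\{p_0\}$, with the round metric, onto $\mathbb R^3$ with the Euclidean metric. For a hypersurface the effect of a conformal change of the ambient metric is to multiply the second fundamental form by a positive function and add a multiple of the first fundamental form; hence the roots of $II-\kappa I=0$ and, outside umbilics, the corresponding eigendirections are left invariant. Consequently $\Pi$ carries the umbilic set $\mathcal U_\alpha$ onto $\mathcal U_\beta$ and the principal line fields of $\alpha$ onto those of $\beta$, so that, under the identification of the common parameter domain $\mathbb T^2$, one has $\mathcal F_i(\beta)=\mathcal F_i(\alpha)$ as foliations of $\mathbb T^2$ (up to a relabeling $\mathcal F_1\leftrightarrow\mathcal F_2$, which is immaterial for the statement). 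In particular $\mathcal P_\beta$ is the pushforward of $\mathcal P_\alpha$ by $\Pi$.

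Finally, a leaf of $\mathcal F_i(\beta)$ read on $\mathbb T^2$ is exactly a leaf of $\mathcal F_i(\alpha)$ read on $\mathbb T^2$, and density in $\mathbb T^2$ is a property of this abstract foliation, unaffected by post-composition with the diffeomorphism $\Pi$: indeed a homeomorphism preserves closures, so $L(\gamma)=\mathbb T^2$ for a leaf of $\mathcal F_i(\alpha)$ forces $L(\gamma)=\mathbb T^2$ for the corresponding leaf of $\mathcal F_i(\beta)$. Therefore every leaf of $\mathcal F_i(\beta)$, $i=1,2$, is dense in $\mathbb T^2$, which is the assertion of Theorem~\ref{th:2}; Figure~\ref{fig:tb} exhibits such a $\beta$ for the choice $\epsilon=\frac13$.

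There is no serious obstacle here; the argument is essentially a transport of structure. The only points needing care are the selection of the projection pole off the compact image $\alpha(\mathbb T^2)$, which is immediate, and the verification that stereographic projection preserves principal directions, i.e.\ the conformal invariance computation above, which is standard and is precisely what underlies the correspondence between the Euclidean and spherical cases mentioned in the Introduction. One should also keep $\epsilon_0$ small enough that $\alpha^{\epsilon_0}$ is still an embedding with irrational rotation numbers for both Poincar\'e maps, which is already guaranteed by the conclusion of Theorem~\ref{th:1p}.
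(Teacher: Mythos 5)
Your proposal is correct and follows exactly the paper's argument: compose the embedding from Theorem~\ref{th:1p} with a stereographic projection whose pole avoids the compact image, and use conformality to transport the principal configuration and hence the density of the leaves. The extra details you supply (choice of pole, conformal invariance of principal directions, preservation of density under a diffeomorphism) are all correct elaborations of the same one-line proof given in the paper.
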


\begin{proof} Take the stereographic projection
$\Pi:\mathbb S^3\setminus\{N\}\to \mathbb R^3$, $N\notin
\alpha^{\epsilon}(\mathbb T^2)$, of the example given in Theorem
\ref{th:1p} and define $\beta=\Pi\circ \alpha^{\epsilon}$. As
$\Pi$ is a conformal map the principal lines of  $\beta$ are the
images by $\Pi$ of the principal lines of $\alpha^{\epsilon}$.
\end{proof}

\section{Concluding Comments}

In this paper it  was shown that there exist   embeddings of the
torus in $\mathbb S^3$ and $\mathbb R^3$  with both principal foliations having all
their leaves dense.

The  technique used here  is  based on the second order
perturbation of differential equations.

It is worth mentioning that the consideration of  only the first
order variational equation, see Proposition \ref{prop:eqee}, was
technically  insufficient  to achieve the results of this paper.
 The same can be said for the technique of local bumpy perturbations used to deform
immersions in   \cite{gsln, gs2}, leading   to  examples with one
of the two principal foliations with dense leaves.

\vskip .5cm

\author{\noindent Jorge Sotomayor\\Instituto de Matem\'{a}tica e Estat\'{\i}stica,\\Universidade de S\~{a}o Paulo,
\\Rua do Mat\~{a}o 1010, Cidade Universit\'{a}ria, \\CEP 05508-090, S\~{a}o Paulo, S.P., Brazil \\
\\ Ronaldo Garcia\\Instituto de Matem\'{a}tica e Estat\'{\i}stica,\\
Universidade Federal de Goi\'as,\\CEP 74001-970,
Caixa Postal 131,\\Goi\^ania, GO, Brazil}

\end{document}